    \def\re{\textnormal {Re}}
    \def\r{{\mathbb R}}
	\newtheorem{theorem}{Theorem}
	\newtheorem{lemma}{Lemma}
	\newtheorem{proposition}{Proposition}
	\newtheorem{definition}{Definition}
\title{Lagrange Inversion Theorem for Dirichlet series}
\author{
{Alexey Kuznetsov
\footnote{Dept. of Mathematics and Statistics,  York University,
4700 Keele Street, Toronto, ON, M3J 1P3, Canada.  \newline
E-mail:  akuznets@yorku.ca  } 
 }}
 \date{\today}
\begin{document}
%**************************************************************************************************
%**************************************************************************************************
%**************************************************************************************************
\maketitle

\begin{abstract} 
We prove an analogue of the Lagrange Inversion Theorem for Dirichlet series. The proof is based on studying properties of Dirichlet convolution polynomials, which are analogues of convolution polynomials introduced by Knuth in \cite{Knuth1992}. 
\end{abstract}

{\vskip 0.15cm}
 \noindent {\it Keywords}: Dirichlet series, Lagrange Inversion Theorem, convolution polynomials \\
 \noindent {\it 2010 Mathematics Subject Classification }: Primary 30B50, Secondary 11M41
%**************************************************************************************************
%**************************************************************************************************
%**************************************************************************************************

%****************************************************************************************************************
%****************************************************************************************************************
%****************************************************************************************************************

\section{Introduction}\label{section_intro}

%****************************************************************************************************************
%****************************************************************************************************************
%****************************************************************************************************************

The Lagrange Inversion Theorem states that if $\phi(z)$ is analytic near $z=z_0$ 
with $\phi'(z_0)\neq 0$ then the function $\eta(w)$, defined as the solution to $\phi(\eta(w))=w$, is analytic near $w_0=\phi(z_0)$ and is represented there by the power series 
\begin{equation}
\eta(w)=z_0+\sum\limits_{n\ge 0} \frac{b_n}{n!} (w-w_0)^n,
\end{equation}
where the coefficients are given by
\begin{equation}\label{Lagrange_1}
b_n=\lim\limits_{z\to z_0} 
\frac{{\textrm {d}}^{n-1}}{{\textrm{d}}z^{n-1}} \Big(\frac{z-z_0}{\phi(z)-\phi(z_0)} \Big)^n.
\end{equation}

This result can be stated in the following simpler form. We assume that $\phi(z_0)=z_0=0$ and $\phi'(0)=1$. We define $A(z)=z/\phi(z)$ and check that $A$ is analytic near $z=0$ and is represented there by the power series 
\begin{equation}\label{Lagrange_Burmann1}
A(z)=1+\sum\limits_{n\ge 1} a_n z^n. 
\end{equation}
Let us define $B(z)$ implicitly via 
\begin{equation}\label{def_B_z}
A(zB(z))=B(z).
\end{equation}
 Then the function $\eta(z)=zB(z)$ solves equation $\eta(z)/A(\eta(z))=z$, which is equivalent to
$\phi(\eta(z))=z$. Applying Lagrange Inversion Theorem in the form \eqref{Lagrange_1} we  obtain 
\begin{equation}\label{Lagrange_Burmann2}
B(z)=\sum\limits_{n\ge 0} \frac{a_n(n+1)}{n+1} z^n,
\end{equation}
where the coefficients $a_n(k)$ are defined via 
 \begin{equation}\label{Lagrange_Burmann3}
 A(z)^k=\sum\limits_{n\ge 0} a_n(k) z^n, \;\;\; k\in {\mathbb N}.  
 \end{equation}
 The above result is known as the Lagrange-B\"urmann formula. It is usually stated as a corollary of the Lagrange Inversion Theorem, but in fact this result is easily seen to be equivalent to the Lagrange Inversion Theorem, as formula \eqref{Lagrange_1} follows from 
formulas \eqref{Lagrange_Burmann1}-\eqref{Lagrange_Burmann3} by going back to functions $\phi(z)=z/A(z)$ and $\eta(z)=zB(z)$ and performing affine transformation of variables. 

For our purposes we would need yet another form of the Lagrange Inversion Theorem. Again, we assume that $A(z)$ is given by the power series \eqref{Lagrange_Burmann1}. We fix $w\in {\mathbb C}$ and define $B(z)$ implicitly 
via 
\begin{equation}\label{def_B_z_w}
A(z B(z)^w)=B(z).
\end{equation}
 It is easy to see that the function $B(z)$ is analytic near $z=0$ and satisfies $B(0)=1$. It turns out that for any $x\in {\mathbb C}$ we have Taylor series expansion 
\begin{equation}\label{formula_Bzwx}
B(z)^x=\sum\limits_{n\ge 0} \frac{x}{x+nw} a_n(x+nw) z^n, 
\end{equation}
where $a_n(x)$ are defined via 
 \begin{equation}\label{def_a_n_x}
 A(z)^x=\sum\limits_{n\ge 0} a_n(x) z^n, \;\;\; x\in {\mathbb C}.  
 \end{equation}
The above result does not seem to be well-known. However, it is not hard to derive it from the Lagrange Inversion Theorem, see 
\cite{ScottSokal2009} and \cite{Stanley}[Exercise 5.58]. 

Formulas \eqref{def_B_z_w}-\eqref{def_a_n_x} invite a number of observations. First of all, formula \eqref{def_a_n_x} implies 
 that  $a_n(x)$ are polynomials in $x$ of degree at most $n$ and that they satisfy the convolution identity
\begin{equation}\label{convolution_pols}
a_n(x+y)=\sum\limits_{k=0}^n a_k(x) a_{n-k}(y), \;\;\; x,y\in {\mathbb C}.
\end{equation}
Thus $a_n(x)$ are {\it  convolution polynomials} as defined by Knuth in \cite{Knuth1992} (see also the paper \cite{Zeng1996} by Zeng  for multi-index extensions of such polynomial families). 
Formula \eqref{formula_Bzwx} implies that (for fixed $w$) the functions
\begin{equation}\label{def_b_n_pols}
b_n(x):=\frac{x}{x+nw} a_n(x+nw)
\end{equation}
 also form a convolution polynomial family (it is easy to see that $b_n(x)$ are indeed polynomials since formula 
 \eqref{def_a_n_x} implies that $a_n(0)=0$, thus $a_n(x)/x$ is a polynomial for every $n\ge 1$). 

Our goal in this paper is to prove a Dirichlet series analogue of the Lagrange Inversion Theorem in the form  \eqref{def_B_z_w}-\eqref{def_a_n_x}. This will be achieved in two steps. First, in Section \ref{section_polynomials} we will introduce and study the {\it Dirichlet convolution polynomials}, which are defined by an identity similar to \eqref{convolution_pols} but with the additive convolution replaced by the Dirichlet convolution. The main result of Section \ref{section_polynomials} is Theorem \ref{thm1}, which gives a construction of new Dirichlet convolution 
polynomials that is similar in spirit to the one given in equation \eqref{def_b_n_pols}. 
Armed with these results, in Section \ref{section_inversion} we will prove the Lagrange Inversion Theorem for Dirichlet series.

%****************************************************************************************************************
%****************************************************************************************************************
%****************************************************************************************************************

\section{Dirichlet convolution polynomials}\label{section_polynomials}

%****************************************************************************************************************
%****************************************************************************************************************
%****************************************************************************************************************

We start by defining Dirichlet analogues of convolution polynomials.
\begin{definition}\label{definition1}
{\textnormal{Polynomials $\{\alpha_n(x)\}_{n\ge 1}$ satisfying 
\begin{equation}\label{alpha_n_mult_convolution}
\alpha_n(x+y)=\sum\limits_{d|n} \alpha_d(x) \alpha_{\frac{n}{d}}(y), \;\;\; n\ge 1, \;\;\; x,y \in {\mathbb R}. 
\end{equation} 
 will be called {\it Dirichlet convolution polynomials}}}. 
\end{definition}

In the next result we establish that, under some mild conditions, any functions $\alpha_n(x)$ that satisfy equations 
\eqref{alpha_n_mult_convolution} must necessarily be polynomials. Thus, when studying Dirichlet convolution equations of the form
\eqref{alpha_n_mult_convolution}, there is no need to look beyond polynomial solutions.  
We recall that the arithmetic function $\Omega(n)$ counts the number of (not necessarily distinct) prime factors of $n$. In other words,  if $n=p_1^{j_1}p_2^{j_2}\dots p_k^{j_k}$ for prime numbers $p_i$ then $\Omega(n)=j_1+j_2+\dots+j_k$.

\begin{proposition}\label{proposition_uniqueness}
Let $\alpha_1(x)\equiv 1$ and $\{\alpha_n(x)\}_{n\ge 2}$ be continuous complex-valued functions of $x\in {\mathbb R}$ that satisfy the Dirichlet convolution identity \eqref{alpha_n_mult_convolution}.
Then for $n\ge 2$
\begin{equation}\label{alpha_n_formula1}
\alpha_n(x)=\sum\limits_{k=1}^{\Omega(n)} {{x}\choose{k}} 
\sum\limits_{\substack{n=d_1d_2\cdots d_k\\ d_i \ge 2}} \alpha_{d_1}(1)\alpha_{d_2}(1)\cdots \alpha_{d_k}(1).
\end{equation}
In particular, for every $n\ge 2$ the function $\alpha_n(x)$ is a polynomial of degree at most 
$\Omega(n)$. 
 \end{proposition}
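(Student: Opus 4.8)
The plan is to pass to the generating Dirichlet series $F_x(s)=\sum_{n\ge1}\alpha_n(x)n^{-s}$ and to read the convolution identity \eqref{alpha_n_mult_convolution} as the multiplicative law $F_{x+y}(s)=F_x(s)F_y(s)$, understood as an identity of formal Dirichlet series. Since $\alpha_1\equiv1$, we may write $F_x(s)=1+G_x(s)$ with $G_x$ supported on $n\ge2$. The first step is to record that $\alpha_n(0)=0$ for every $n\ge2$: setting $x=y=0$ in \eqref{alpha_n_mult_convolution} and splitting off the $d=1$ and $d=n$ terms gives $\alpha_n(0)=-\sum_{1<d<n}\alpha_d(0)\alpha_{n/d}(0)$, and strong induction on $n$ (the base case $n$ prime being immediate, as the sum is empty) yields $\alpha_n(0)=0$, i.e.\ $F_0(s)\equiv1$.

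Next I would prove, by strong induction on $n$, that each $\alpha_n$ is a polynomial of degree at most $\Omega(n)$. Writing $\Delta_h f(x):=f(x+h)-f(x)$ and applying \eqref{alpha_n_mult_convolution} with $y=h$, the $d=1$ and $d=n$ terms again split off to give $\Delta_h\alpha_n(x)=\alpha_n(h)+\sum_{1<d<n}\alpha_{n/d}(h)\,\alpha_d(x)$. By the induction hypothesis every $\alpha_d$ with $1<d<n$ is a polynomial in $x$ of degree at most $\Omega(d)\le\Omega(n)-1$ (a proper divisor satisfies $\Omega(d)<\Omega(n)$), so for each fixed $h$ the function $x\mapsto\Delta_h\alpha_n(x)$ is a polynomial of degree at most $\Omega(n)-1$. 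Consequently $\Delta_h^{\,\Omega(n)+1}\alpha_n\equiv0$ for every $h$, and the classical characterization of polynomial functions (a continuous function all of whose iterated differences $\Delta_h^{N}$ vanish identically in $h$ is a polynomial of degree below $N$) forces $\alpha_n$ to be a polynomial of degree at most $\Omega(n)$. I expect this step --- extracting genuine polynomiality from mere continuity --- to be the main obstacle, and the iterated-difference argument is what makes it go through; the base case $\Omega(n)=1$ is just Cauchy's equation $\alpha_n(x+y)=\alpha_n(x)+\alpha_n(y)$.

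With polynomiality in hand, it remains to identify the coefficients, which I would do by evaluating at positive integers. Iterating the relation $F_{x+1}=F_xF_1$ (that is, \eqref{alpha_n_mult_convolution} with $y=1$) gives $F_m(s)=F_1(s)^m=\bigl(1+G_1(s)\bigr)^m$ for every $m\in\mathbb{N}$. Expanding by the binomial theorem and extracting the coefficient of $n^{-s}$, and noting that a factorization $n=d_1\cdots d_k$ into factors $d_i\ge2$ forces $k\le\Omega(n)$, I obtain for $n\ge2$ that $\alpha_n(m)=\sum_{k=1}^{\Omega(n)}\binom{m}{k}\sum_{n=d_1\cdots d_k,\,d_i\ge2}\alpha_{d_1}(1)\cdots\alpha_{d_k}(1)$, where the upper limit may be kept at $\Omega(n)$ since $\binom{m}{k}=0$ for $k>m$. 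The right-hand side is precisely the claimed polynomial \eqref{alpha_n_formula1} evaluated at $x=m$.

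Finally, both $\alpha_n$ and the polynomial on the right of \eqref{alpha_n_formula1} are polynomials in $x$ agreeing at every positive integer $m$; since a polynomial is determined by its values on an infinite set, they coincide for all $x$, which establishes \eqref{alpha_n_formula1} together with the degree bound $\deg\alpha_n\le\Omega(n)$.
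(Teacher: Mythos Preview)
Your proof is correct, but it takes a somewhat different route from the paper. The paper first \emph{constructs} the candidate polynomial $c_n(x)$ as the coefficient of $n^{-s}$ in the formal binomial expansion of $\bigl(1+\sum_{m\ge2}\alpha_m(1)m^{-s}\bigr)^x$, observes that these $c_n$ also satisfy the Dirichlet convolution identity, and then runs a strong induction on $n$: assuming $\alpha_k=c_k$ for $k<n$, the interior divisor sums in the two convolution identities coincide, so $h:=c_n-\alpha_n$ satisfies the plain Cauchy equation $h(x+y)=h(x)+h(y)$, is continuous, and has $h(1)=0$, whence $h\equiv0$. Thus the paper never needs anything beyond the additive Cauchy equation. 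Your argument instead first establishes polynomiality of $\alpha_n$ abstractly, via the iterated-difference / Fr\'echet characterization (a genuinely higher-order tool), and only afterwards identifies the polynomial by matching values at positive integers through $F_m=F_1^m$. Both arguments are clean inductions on $n$; the paper's has the advantage of using only the degree-one functional equation, while yours cleanly separates the two logical steps (polynomiality, then identification) at the cost of invoking the stronger Fr\'echet-type result.
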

  \begin{proof}
   Let us denote by $\phi(s)$ the formal Dirichlet series
 \begin{equation}\label{def_f}
 \phi(s)=1+\sum\limits_{n\ge 2} \frac{\alpha_n(1)}{n^s}. 
 \end{equation}
 Define $\{c_n(x)\}_{n\ge 1}$ via  
 \begin{equation}\label{def_c_n(x)}
 \phi(s)^x=\sum\limits_{n\ge 1} \frac{c_n(x)}{n^s}
 \end{equation}
Here by $\phi(s)^x$ we understand the formal Dirichlet series obtained by applying binomial series in the following way: 
 \begin{align*}
 \phi(s)^x&= \Big(1+\sum\limits_{n\ge 2} \frac{\alpha_n(1)}{n^s} \Big)^x=1+ \sum\limits_{k\ge 1} {{x}\choose{k}} 
 \Big(\sum\limits_{n\ge 2} \frac{\alpha_n(1)}{n^s} \Big)^k.
 \end{align*}
 Collecting the terms in front of $n^{-s}$ we get
 $$
 \Big(\sum\limits_{n\ge 2} \frac{\alpha_n(1)}{n^s} \Big)^k=
 \sum\limits_{n\ge 2} \frac{1}{n^s} \sum\limits_{\substack{n=d_1d_2\cdots d_k\\ d_i \ge 2}} \alpha_{d_1}(1)\alpha_{d_2}(1)\cdots \alpha_{d_k}(1).
 $$
 Note that 
 $$
 \sum\limits_{\substack{n=d_1d_2\cdots d_k\\ d_i \ge 2}} \alpha_{d_1}(1)\alpha_{d_2}(1)\cdots \alpha_{d_k}(1)=0
 $$
 for $k> \Omega(n)$:  we have an empty sum since we can not write $n$ as a product of $k>\Omega(n)$ factors $d_1, d_2, \dots, d_k$, each factor satisfying $d_i>2$. 
 Combining the above formulas we derive an expression 
 $$
 \phi(s)^x=1+\sum\limits_{n\ge 2} \frac{1}{n^s} \times \Big[ \sum\limits_{k=1}^{\Omega(n)} {{x}\choose{k}} 
\sum\limits_{\substack{n=d_1d_2\cdots d_k\\ d_i \ge 2}} \alpha_{d_1}(1)\alpha_{d_2}(1)\cdots \alpha_{d_k}(1)\Big]. 
 $$ 
 Thus we have $c_1(x)\equiv 1$ and for $n\ge 2$ 
 $$
 c_n(x)=\sum\limits_{\substack{n=d_1d_2\cdots d_k\\ d_i \ge 2}} \alpha_{d_1}(1)\alpha_{d_2}(1)\cdots \alpha_{d_k}(1).
 $$
It remains to prove that $\alpha_n(x)=c_n(x)$ for all $n\ge 1$ and $x\in {\mathbb R}$. The proof will proceed by induction. 
 
 We know that $\alpha_1(x)=c_1(x)=1$ for all $x\in {\mathbb R}$. Assume that $\alpha_k(x)=c_k(x)$ for all $k\le n-1$ and $x\in {\mathbb R}$. 
 Both $c_n(x)$ and $\alpha_n(x)$ satisfy the Dirichlet convolution identity \eqref{alpha_n_mult_convolution}, thus
 $$
 c_n(x+y)=c_n(x)+c_n(y)+\sum\limits_{\substack{2\le d \le n-1\\ d|n}} c_d(x) c_{\frac{n}{d}}(y). 
 $$ 
 By the induction hypothesis we have $c_k(x)=\alpha_k(x)$ for all $k\le n-1$ and $x\in {\mathbb R}$, therefore we can rewrite the above identity in the form
 $$
 c_n(x+y)=c_n(x)+c_n(y)+\sum\limits_{\substack{2\le d \le n-1\\ d|n}} \alpha_d(x) \alpha_{\frac{n}{d}}(y).
 $$ 
 Moreover, we have a similar equation for $\alpha_n(x+y)$:
 $$
 \alpha_n(x+y)=\alpha_n(x)+\alpha_n(y)+\sum\limits_{\substack{2\le d \le n-1\\ d|n}} \alpha_d(x) \alpha_{\frac{n}{d}}(y).
 $$ 
 Thus the function $h(x)=c_n(x)-\alpha_n(x)$ satisfies $h(x+y)=h(x)+h(y)$, $h$ is continuous on ${\mathbb R}$ and 
 $h(1)=0$ (since $c_n(1)=\alpha_n(1)$, see \eqref{def_c_n(x)}). We conclude that $h(x)$ is identically equal to zero, so that $\alpha_n(x)\equiv c_n(x)$. 
 \end{proof}

An important observation is that formula \eqref{alpha_n_formula1} implies that $\alpha_n(0)=0$ for all $n\ge 2$. Thus 
the functions 
\begin{equation}\label{def_hat_alpha_n}
\hat \alpha_n(x):=x^{-1} \alpha_n(x), \;\;\; n\ge 2
\end{equation} 
are polynomials of degree $\Omega(n)-1$. In particular, for every prime $p$ we have $\hat \alpha_p(x)\equiv {\textrm {const}}$. 

In the next proposition we collect a number of properties of Dirichlet convolution polynomials. 
\begin{proposition}\label{proposition_properties_alpha_n} Let $\alpha_n(x)$ be Dirichlet convolution polynomials. 
\begin{itemize}
\item[(i)] For $n\ge 2$, $x,y\in {\mathbb C}$ and $y\neq 0$ 
\begin{equation}\label{alpha_n_formula1.5}
\alpha_n(x)=\sum\limits_{k=1}^{\Omega(n)} {{x/y}\choose{k}} 
\sum\limits_{\substack{n=d_1d_2\cdots d_k\\ d_i \ge 2}} \alpha_{d_1}(y)\alpha_{d_2}(y)\cdots \alpha_{d_k}(y).
\end{equation}
\item[(ii)] For $n\ge 2$ and $x\in {\mathbb C}$ 
\begin{equation}\label{alpha_n_formula2}
\alpha_n(x)=\sum\limits_{k=1}^{\Omega(n)} \frac{x^k}{k!} 
\sum\limits_{\substack{n=d_1d_2\cdots d_k\\ d_i \ge 2}} \hat \alpha_{d_1}(0)\hat \alpha_{d_2}(0)\cdots \hat \alpha_{d_k}(0).
\end{equation}
\item[(iii)] For $N\ge 2$ and $s,x\in {\mathbb C}$ 
\begin{equation}\label{e_Dirichlet_polynomial}
\exp\Big( x \sum\limits_{n=2}^N \frac{\hat \alpha_n(0)}{n^s} \Big)=
\sum\limits_{n=1}^N \frac{\alpha_n(x)}{n^s} + \phi_{N+1}(s)
\end{equation}
for some absolutely convergent Dirichlet series $\phi_{N+1}(s)=\sum_{n\ge N+1} c_n(x)n^{-s}$.
\item[(iv)] For $n\ge 2$ and $x\in {\mathbb C}$ 
\begin{equation}\label{identity_with_integrals}
\alpha_n(x)=\sum\limits_{d|n,d \ge 2} \hat \alpha_d(0) \int_0^x \alpha_{\frac{n}{d}}(y) {\textrm{d}} y.
\end{equation} 
\item[(v)] For $n\ge 2$ and $x\in {\mathbb C}$ 
\begin{equation}\label{convolution_eqn}
\ln(n)  \hat \alpha_n(x)=\sum\limits_{d|n, d\ge 2} \ln(d) \hat \alpha_d(0) \alpha_{\frac{n}{d}}(x). 
\end{equation}
\item[(vi)] If the arithmetic function $n  \mapsto \alpha_n(x)$ is multiplicative for some nonzero $x \in {\mathbb C}$ (that is, 
$\alpha_{mn}(x)=\alpha_m(x)\alpha_n(x)$ for all relative prime positive integers $m$ and $n$), then it is multiplicative for all $x \in {\mathbb C}$. 
\end{itemize}
\end{proposition}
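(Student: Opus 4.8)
The plan is to treat all six parts through a single object, the formal Dirichlet series
\[
F_x(s):=\sum_{n\ge 1}\frac{\alpha_n(x)}{n^s}.
\]
The Dirichlet convolution identity \eqref{alpha_n_mult_convolution} states exactly that $F_{x+y}(s)=F_x(s)F_y(s)$, so $x\mapsto F_x$ is a one-parameter group under Dirichlet multiplication; by Proposition \ref{proposition_uniqueness} this gives $F_x(s)=\phi(s)^x$ with $\phi(s)=1+\sum_{n\ge 2}\alpha_n(1)n^{-s}$. Setting $\psi(s):=\log\phi(s)$ and differentiating $\phi(s)^x$ in $x$ at $x=0$ coefficientwise, using $\alpha_n(0)=0$ and $\alpha_n'(0)=\hat\alpha_n(0)$ for $n\ge 2$ (see \eqref{def_hat_alpha_n}), identifies $\psi(s)=\sum_{n\ge 2}\hat\alpha_n(0)n^{-s}$. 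Hence
\[
F_x(s)=\exp\!\big(x\,\psi(s)\big),\qquad F_x(s)=F_y(s)^{x/y}\ \ (y\neq 0).
\]
Every one of the six statements will be read off as the coefficient of $n^{-s}$ after a suitable manipulation of these two identities; since each such coefficient is a polynomial in $x$, all the manipulations are legitimate at the level of formal Dirichlet series.

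For (i) I would expand $F_y(s)^{x/y}=\big(1+\sum_{m\ge 2}\alpha_m(y)m^{-s}\big)^{x/y}$ with the binomial series, repeating the computation in the proof of Proposition \ref{proposition_uniqueness} but now with exponent $x/y$; the coefficient of $n^{-s}$, which truncates at $k=\Omega(n)$ because each $d_i\ge 2$, is precisely the right-hand side of \eqref{alpha_n_formula1.5}. Part (ii) then follows by letting $y\to 0$ in (i): since $\alpha_{d_i}(y)=y\,\hat\alpha_{d_i}(y)$, one has $\binom{x/y}{k}\prod_i\alpha_{d_i}(y)=\frac{1}{k!}\prod_{j=0}^{k-1}(x-jy)\prod_i\hat\alpha_{d_i}(y)\to\frac{x^k}{k!}\prod_i\hat\alpha_{d_i}(0)$, and the finite sums over $k$ and over factorizations let one pass to the limit termwise, reproducing \eqref{alpha_n_formula2}. (Equivalently, (ii) is the coefficientwise expansion of $\exp(x\psi(s))$.)

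For (iii) I would set $\psi_N(s):=\sum_{n=2}^N\hat\alpha_n(0)n^{-s}$, a finite Dirichlet polynomial, so that $\exp(x\psi_N(s))$ is an absolutely convergent Dirichlet series for $\mathrm{Re}(s)$ large. Since any factorization $n=d_1\cdots d_k$ with $d_i\ge 2$ and $n\le N$ automatically has all $d_i\le N$, the coefficients of $n^{-s}$ with $n\le N$ coincide in $\exp(x\psi_N)$ and in $\exp(x\psi)=F_x$, and both equal $\alpha_n(x)$; the tail over $n\ge N+1$ is the claimed absolutely convergent remainder $\phi_{N+1}(s)$. For (iv) I would differentiate $F_x=\exp(x\psi)$ in $x$ to get $\partial_x F_x=\psi\,F_x$; comparing coefficients of $n^{-s}$ yields $\alpha_n'(x)=\sum_{d\mid n,\,d\ge 2}\hat\alpha_d(0)\,\alpha_{n/d}(x)$, and integrating from $0$ to $x$ with $\alpha_n(0)=0$ gives \eqref{identity_with_integrals}. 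For (v) I would instead differentiate in $s$, using $\partial_s F_x=x\psi'F_x$ together with $\partial_s n^{-s}=-\ln(n)\,n^{-s}$ and $\psi'(s)=-\sum_{m\ge 2}\ln(m)\hat\alpha_m(0)m^{-s}$; comparing coefficients gives $\ln(n)\alpha_n(x)=x\sum_{d\mid n,\,d\ge 2}\ln(d)\hat\alpha_d(0)\alpha_{n/d}(x)$, and dividing by $x$ as a polynomial identity gives \eqref{convolution_eqn}.

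Part (vi) is the step I expect to be the main obstacle, since it requires genuine structural input rather than coefficient bookkeeping. The key fact is that a formal Dirichlet series with constant term $1$ has multiplicative coefficients if and only if its formal logarithm is supported on prime powers (equivalently, if and only if it has an Euler product): writing such a logarithm as $\sum_p\psi^{(p)}$ with $\psi^{(p)}$ supported on the powers of $p$, exponentiation produces the Euler product $\prod_p\exp(\psi^{(p)})$, and conversely taking logarithms of an Euler product gives a prime-power-supported logarithm. Applying this to $F_{x_0}=\exp(x_0\psi)$, multiplicativity of $n\mapsto\alpha_n(x_0)$ means that $x_0\psi=\log F_{x_0}$ is supported on prime powers; since $x_0\neq 0$, the series $\psi$ itself is supported on prime powers, i.e. $\hat\alpha_n(0)=0$ whenever $n$ is not a prime power. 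But then $x\psi=\log F_x$ is supported on prime powers for every $x\in\mathbb{C}$, so each $F_x$ has an Euler product and $n\mapsto\alpha_n(x)$ is multiplicative for all $x$. The only delicate point is keeping the Euler-product and logarithm manipulations rigorous, which is harmless here because for each fixed $n$ only the primes $p\le n$ contribute, so every relevant product or logarithm reduces to a finite operation on the coefficient indexed by $n$.
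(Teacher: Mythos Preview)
Your proof is correct and follows essentially the same route as the paper: both treat the formal Dirichlet series $F_x(s)=\sum_{n\ge 1}\alpha_n(x)n^{-s}$, use the binomial expansion of $F_y^{x/y}$ for (i), the limit $y\to 0$ for (ii), the exponential expansion for (iii), differentiation in $x$ (you via $\partial_x e^{x\psi}=\psi e^{x\psi}$, the paper via the difference quotient of the convolution identity) for (iv), differentiation in $s$ for (v), and the Euler-product characterization for (vi). The only cosmetic differences are that the paper derives (iv) directly from \eqref{alpha_n_mult_convolution} rather than from the exponential formula, and for (vi) the paper argues by raising an existing Euler product to the power $y/x$ rather than via your equivalent observation that $\psi=\log F_{x_0}/x_0$ is prime-power-supported.
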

\begin{proof}
In the proof of Proposition \ref{proposition_uniqueness} we saw that if $\alpha_n(x)$ are 
Dirichlet convolution polynomials then the corresponding formal Dirichlet series satisfy an identity
\begin{equation}\label{bijection}
1+\sum\limits_{n\ge 2} \frac{\alpha_n(x)}{n^s}  
=\Big(1+\sum\limits_{n\ge 2} \frac{\alpha_n(y)}{n^s} \Big)^{\frac{x}{y}},
\end{equation}
where $x,y \in {\mathbb C}$ and $y\neq 0$. 
Expanding the right-hand side in binomial series and collecting the terms in front of $n^{-s}$ we obtain identity \eqref{alpha_n_formula1.5}.

Formula \eqref{alpha_n_formula2} follows from \eqref{alpha_n_formula1.5} by taking the limit 
$y\to 0$ and using the fact that 
$$
{{x/y}\choose{k}}y^k=\frac{x(x-y)(x-2y)\dots (x-(k-1)y)}{k!}  \to \frac{x^k}{k!}. 
$$

Formula \eqref{e_Dirichlet_polynomial} follows by expanding 
$$
\exp\Big( x \sum\limits_{n=2}^N \frac{\hat \alpha_n(0)}{n^s} \Big)=1+\sum\limits_{k\ge 1} 
\frac{x^k}{k!} \Big( \sum\limits_{n=2}^N \frac{\hat \alpha_n(0)}{n^s} \Big)^k,
$$
 collecting the coefficients in front of 
$n^{-s}$ and comparing the resulting coefficients with expression for $\alpha_n(x)$ given in 
\eqref{alpha_n_formula2}.

To prove \eqref{identity_with_integrals} we rearrange the convolution identity \eqref{alpha_n_mult_convolution} in the form
$$
\frac{1}{y}(\alpha_n(x+y)-\alpha_n(x))=\sum\limits_{d|n,d \ge 2} \frac{\alpha_d(y)}{y} \alpha_{\frac{n}{d}}(x). 
$$
Taking the limit as $y\to 0$ we obtain
$$
\frac{{\textrm{d}}}{{\textrm{d}}x} \alpha_n(x)=\sum\limits_{d|n,d \ge 2} \hat \alpha_d(0) \alpha_{\frac{n}{d}}(x). 
$$
Formula \eqref{identity_with_integrals} follows from the above identity and the fact that $\alpha_n(0)=0$. 

Next, let us prove formula \eqref{convolution_eqn}. We fix a large integer $N$, take derivative in $s$ of both sides of \eqref{e_Dirichlet_polynomial} and obtain
\begin{align*}
-\sum\limits_{n=2}^{N} \ln(n) \frac{\alpha_n(x)}{n^s}+\phi'_{N+1}(s)&=
-\exp\Big(x \sum\limits_{n=2}^N \frac{\hat \alpha_n(0)}{n^s} \Big)
x \sum\limits_{n=2}^N \ln(n)\frac{\hat \alpha_n(0)}{n^s}\\
&=-\Big(\sum\limits_{n=2}^{N}  \frac{\alpha_n(x)}{n^s}+\phi_{N+1}(s)\Big)
x\sum\limits_{n=2}^N \ln(n)\frac{\hat \alpha_n(0)}{n^s}
\end{align*}
Comparing the coefficients in front of $n^{-s}$ in the above identity and using the fact that both Dirichlet series $\phi_{N+1}(s)$ and $\phi'_{N+1}(s)$ only have terms $m^{-s}$ with $m>N$ we obtain formula \eqref{convolution_eqn}
for all $2\le n \le N$. Since $N$ was arbitrary, this proves that \eqref{convolution_eqn} holds true for all $n\ge 2$. 

Finally, let us prove item (vi). Assume that the arithmetic function $n\mapsto \alpha_n(x)$ is multiplicative. This is equivalent to saying that the formal Dirichlet series associated to 
$\alpha_n(x)$ can be written as a product of series over primes: 
\begin{equation}
\sum\limits_{n\ge 1} \frac{\alpha_n(x)}{n^s}  =
\prod\limits_{p} \Big(1+ \sum\limits_{j\ge 1} \frac{c(p,j)}{p^{js}} \Big).
\end{equation}
 Then, applying formula
\eqref{bijection} we see that for every $y\in {\mathbb R}$ 
\begin{align*}
\sum\limits_{n\ge 1} \frac{\alpha_n(y)}{n^s} &=
\Big(\sum\limits_{n\ge 1} \frac{\alpha_n(x)}{n^s} \Big)^{\frac{y}{x}}=
\prod\limits_{p} \Big(1+ \sum\limits_{j\ge 1} \frac{c(p,j)}{p^{js}} \Big)^{\frac{y}{x}}
=\prod\limits_{p} \Big(1+ \sum\limits_{j\ge 1} \frac{\tilde c(p,j)}{p^{js}} \Big),
\end{align*}
for some other coefficients $\tilde c(p,j)$. Thus the function $n\mapsto \alpha_n(y)$ is also multiplicative. 
\end{proof}

Formula \eqref{e_Dirichlet_polynomial} shows that Dirichlet convolution polynomials $\alpha_n(x)$ are obtained from the formal Dirichlet series $f(s)=\sum_{n\ge 2} \hat a_n(0) n^{-s}$ via identity
$$
e^{x f(s)}=\sum\limits_{n\ge 1} \frac{\alpha_n(x)}{n^s}. 
$$
We will call $f$ {\it the generating series} for $\alpha_n(x)$.

Next we consider the question of how to construct new Dirichlet convolution polynomials starting from existing ones. The following observation follows easily from Definition \ref{definition1}:

\vspace{0.25cm}
\noindent
Assume that $\alpha_n(x)$ and $\beta_n(x)$ are Dirichlet convolution polynomials. Then we can construct new Dirichlet convolution polynomials $\gamma_n(x)$ in one of the following three ways
\begin{itemize}
\item[(i)]  $\gamma_n(x)=\alpha_n(wx)$ for some $w\in {\mathbb C}$ and all $n\ge 1$;
\item[(ii)] 
$
\gamma_n(x)=\sum\limits_{d|n} \alpha_d(x) \beta_{\frac{n}{d}}(x)$
for all $n\ge 1$;
\item[(iii)] $\gamma_n(x)=c_n \alpha_n(x)$ for all $n\ge 1$, where the arithmetic function $n \mapsto c_n$ is completely multiplicative (that is, 
$c_{mn}=c_m c_n$ for all positive integers $m$ and $n$). 
\end{itemize}

Next we show that there is yet another (and much  less obvious) way of constructing new Dirichlet convolution polynomials from existing ones. The following theorem presents a generalization of the construction given in \eqref{def_b_n_pols}. 

\begin{theorem}\label{thm1}
For any Dirichlet convolution polynomials $\alpha_n(x)$ and fixed $w\in {\mathbb C}$ the functions 
\begin{equation}\label{def_beta_n}
\beta_n(x):=\frac{x}{x+w \ln(n)} \alpha_n(x+w\ln(n)), \;\;\; n\ge 1,
\end{equation}
are also Dirichlet convolution polynomials. 
\end{theorem}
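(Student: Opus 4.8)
The plan is to prove that the candidate family $\{\beta_n\}$ meets the generating-series characterization of Dirichlet convolution polynomials, i.e. that $\mathcal{B}(x,s):=\sum_{n\ge 1}\beta_n(x)\,n^{-s}$ equals $e^{x g(s)}$ for the fixed series $g(s):=\sum_{n\ge 2}\hat\beta_n(0)\,n^{-s}$, since multiplying Dirichlet series convolves coefficients and so $\mathcal B(x+y,s)=\mathcal B(x,s)\mathcal B(y,s)$ is exactly the convolution identity \eqref{alpha_n_mult_convolution} for the $\beta_n$. First the routine reductions: because $\alpha_n(0)=0$ for $n\ge 2$ we may write $\alpha_n(u)=u\hat\alpha_n(u)$, whence
\[
\beta_n(x)=x\,\hat\alpha_n(x+w\ln n)=\alpha_n(x+w\ln n)-w\ln n\,\hat\alpha_n(x+w\ln n),
\]
so each $\beta_n$ is genuinely a polynomial, $\beta_1\equiv 1$, and $\beta_n(0)=0$ for $n\ge 2$; moreover $\hat\beta_n(0)=\hat\alpha_n(w\ln n)$. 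I will then invoke the following characterization: a family with $\gamma_1\equiv 1$ and $\gamma_n(0)=0$ is a Dirichlet convolution family iff it satisfies the linear differential recurrence $\gamma_n'(x)=\sum_{d\mid n,\,d\ge 2}\hat\gamma_d(0)\,\gamma_{n/d}(x)$. One direction is part~(iv) of Proposition~\ref{proposition_properties_alpha_n}; the converse follows by uniqueness, since this recurrence together with $\gamma_n(0)=0$ determines every $\gamma_n$ by induction on $\Omega(n)$, and the honest convolution family built from $\sum_{d\ge 2}\hat\gamma_d(0)\,d^{-s}$ already satisfies it. Hence it suffices to verify
\[
\beta_n'(x)=\sum_{d\mid n,\,d\ge 2}\hat\alpha_d(w\ln d)\,\beta_{n/d}(x),\qquad n\ge 2. \quad(\star)
\]

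To attack $(\star)$ I would differentiate the clean form above and substitute the two convolution recurrences already in hand. Differentiation gives $\beta_n'(x)=\alpha_n'(x+w\ln n)-w\ln n\,\hat\alpha_n'(x+w\ln n)$. Part~(iv), in its differentiated form $\alpha_n'(u)=\sum_{d\mid n,\,d\ge 2}\hat\alpha_d(0)\alpha_{n/d}(u)$, and part~(v) of Proposition~\ref{proposition_properties_alpha_n} (equation \eqref{convolution_eqn}), after multiplication by $w$ and differentiation in $u$, together yield at $u=x+w\ln n$
\[
\beta_n'(x)=\sum_{d\mid n,\,d\ge 2}\hat\alpha_d(0)\Big[\alpha_{n/d}(x+w\ln n)-w\ln d\,\alpha_{n/d}'(x+w\ln n)\Big].
\]
Here the additivity $\ln n=\ln d+\ln(n/d)$ is what pairs the $\ln d$ weight of \eqref{convolution_eqn} against the shift in the argument.

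The main obstacle is precisely reconciling this with the right-hand side of $(\star)$, whose $d$-th term $\hat\alpha_d(w\ln d)\,\beta_{n/d}(x)$ carries the argument $x+w\ln(n/d)=x+w\ln n-w\ln d$ and the coefficient $\hat\alpha_d(w\ln d)$ rather than $\hat\alpha_d(0)$; the two expressions do not match termwise, and the bracket above still lives at the shifted point $x+w\ln n$. I expect to resolve this by induction on $\Omega(n)$: writing $x+w\ln n=(x+w\ln(n/d))+w\ln d$ and expanding the bracket via the convolution identity \eqref{alpha_n_mult_convolution} for $\alpha_{n/d}$ at this split, one can trade the surplus shift $w\ln d$ for lower-index data, after which the induction hypothesis (that $(\star)$, equivalently the convolution property, holds at every index of smaller $\Omega$) closes the identity.

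A cleaner, computation-ready alternative I would keep in reserve is to argue entirely at the level of Dirichlet series through the shift identity
\[
\sum_{n\ge 1}\psi_n(x+w\ln n)\,n^{-s}=e^{-w\,\partial_x\partial_s}\sum_{n\ge 1}\psi_n(x)\,n^{-s},
\]
valid because $\partial_x$ and $\partial_s$ commute and $\ln n\cdot n^{-s}=-\partial_s n^{-s}$. Applying it to the clean form and using $e^{x f(s)}=\sum_n\alpha_n(x)n^{-s}$ with $f=\log A$ (the generating series of $\{\alpha_n\}$) collapses $\mathcal B$ into the closed operator expression
\[
\mathcal{B}(x,s)=e^{-w\,\partial_x\partial_s}\Big[\big(1+w\,\partial_s f(s)\big)e^{x f(s)}\Big].
\]
The emergence of the Jacobian factor $1+w\,\partial_s f$ is exactly the signal that the shifted series is a pure exponential $e^{x g(s)}$, so that $(\star)$ becomes a Lagrange--B\"urmann-type verification; recognizing and exploiting this factor is the step I expect to demand the most care, and it is the natural bridge to the inversion theorem proved in Section~\ref{section_inversion}.
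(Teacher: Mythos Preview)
Your framework is sound: the differential-recurrence characterization you state is correct, and the computation leading to
\[
\beta_n'(x)=\sum_{d\mid n,\,d\ge2}\hat\alpha_d(0)\bigl[\alpha_{n/d}(x+w\ln n)-w\ln d\,\alpha_{n/d}'(x+w\ln n)\bigr]
\]
is right. The gap is that you never actually close $(\star)$. The induction you propose requires expanding $\alpha_{n/d}(x+w\ln n)$ at the split $(x+w\ln(n/d))+w\ln d$; the inner arguments that appear are $x+w\ln(n/d)$, which depend on the \emph{outer} divisor $d$, and the resulting double sum has no evident reorganization matching $\sum_{d}\hat\alpha_d(w\ln d)\beta_{n/d}(x)$. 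Your operator-calculus alternative has the same obstruction in disguise: showing that $e^{-w\partial_x\partial_s}[(1+w\partial_s f)e^{xf}]$ collapses to a pure exponential $e^{xg}$ is itself a Lagrange--B\"urmann statement, and you have only asserted it. Both routes stall at exactly the combinatorial identity that is the content of the theorem.

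The paper takes a completely different route that avoids this computation. It fixes the first $k$ primes, writes $n=p_1^{m_1}\cdots p_k^{m_k}$, and sets $F_{\mathbf m}(x):=\alpha_n(x)$; the Dirichlet convolution \eqref{alpha_n_mult_convolution} becomes the additive multinomial convolution $F_{\mathbf m}(x+y)=\sum_{\mathbf 0\le\mathbf j\le\mathbf m}F_{\mathbf j}(x)F_{\mathbf m-\mathbf j}(y)$. One then quotes Zeng's Theorem~2 on multinomial convolution polynomials, which supplies the shifted identity \eqref{derived_convolution_identity} for any $\mathbf t\in\mathbb C^k$; specializing $\mathbf t=w(\ln p_1,\dots,\ln p_k)$ gives the Dirichlet identity for $\beta_n$ directly. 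In effect the paper outsources the hard combinatorics to a known black box, whereas your plan amounts to reproving that black box from scratch---plausible in principle, but not accomplished in what you have written.
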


Before we prove Theorem \ref{thm1}, we would like to remark that formula \eqref{def_beta_n} implies that $\beta_1(x)=\alpha_1(x)=1$ for all $x$ and for $n\ge 2$ the  Dirichlet convolution polynomials $\beta_n(x)$ can be defined in a more concise way using the hat-operation introduced in \eqref{def_hat_alpha_n}: 
\begin{equation}\label{def_hat_beta_n}
\hat \beta_n(x)=\hat \alpha_n(x+w\ln(n)).  
\end{equation}

\vspace{0.25cm}
\noindent
{\bf Proof of Theorem \ref{thm1}:}
Let $p_1=2, p_2=3, p_3=5, \dots, p_k$ be the first $k$ prime numbers. For a $k$-tuple of non-negative integers ${\bf m}=(m_1,m_2,\dots,m_k)$  we define 
\begin{equation}\label{def_F_n}
F_{{\bf m}}(x):=\alpha_n(x),
\end{equation}
where 
\begin{equation}\label{n_prime_factorization}
n=p_1^{m_1}p_2^{m_2}\cdots p_k^{m_k}. 
\end{equation}
The Dirichlet convolution identity \eqref{alpha_n_mult_convolution} implies the following additive convolution identity for polynomials $F_{\bf m}(x)$: 
\begin{equation}\label{multinomial_convolution}
F_{{\bf m}}(x+y)=\sum\limits_{{\bf 0} \le {\bf j} \le {\bf m}} 
F_{{\bf j}}(x) F_{{\bf m}-{\bf j}}(y),
\end{equation}
where the summation is over all $k$-tuples of integers ${\bf j}=(j_1,j_2,\dots,j_k)$ with $0\le j_l\le m_l$.
Note that the degree of $F_{\bf m}(x)$ is at most $m_1+m_2+\dots+m_k$ (see Proposition \ref{proposition_uniqueness}). This result and the convolution identity \eqref{multinomial_convolution} imply that $F_{\bf m}(x)$ are {\it multinomial convolution polynomials}, as defined by Zeng in \cite{Zeng1996}. Theorem 2 in \cite{Zeng1996} tells us that for any ${\bf t}\in {\mathbb C}^k$ we have 
\begin{equation}\label{derived_convolution_identity}
\frac{(x+y)F_{\bf m}(x+y+{\bf t}\cdot {\bf m})}{
x+y+{\bf t}\cdot {\bf m}}=\sum\limits_{{\bf 0} \le {\bf j} \le {\bf m}} 
\frac{xF_{\bf j}(x+{\bf t}\cdot {\bf j})}{
x+{\bf t}\cdot {\bf j}} \times 
\frac{yF_{{\bf m}-{\bf j}}(y+{\bf  t}\cdot ({\bf m}-{\bf j}))}{
y+{\bf  t}\cdot ({\bf m}-{\bf j})}.
\end{equation}
Here ${\bf t}\cdot {\bf j}$ denotes the dot-product $t_1j_1+t_2j_2+\dots+t_k j_k$. We take 
$$
{\bf t}=w (\ln(p_1),\ln(p_2),\dots,\ln(p_k)),
$$ 
and note that if $n$ is given by \eqref{n_prime_factorization} then 
$$
{\bf t}\cdot {\bf m}=w \ln(n). 
$$
Using this result, formulas \eqref{def_F_n} and \eqref{n_prime_factorization} 
and identity \eqref{derived_convolution_identity} we find that 
for every $n$ that has only prime factors $p_1,p_2,\dots,p_k$ we have
$$
\frac{(x+y)\alpha_n(x+y+w\ln(n))}{x+y+w\ln(n)}=
\sum\limits_{d|n} \frac{x\alpha_d(x+w\ln(d))}{x+w\ln(d)} \times 
\frac{y\alpha_{\frac{n}{d}}(y+w\ln(\frac{n}{d}))}{y+w\ln(\frac{n}{d})}. 
$$
Since $k$ was arbitrary, we conclude that the above Dirichlet convolution identity holds for all $n\ge 1$. In other words, the polynomials $\beta_n(x)$ defined via \eqref{def_beta_n} are Dirichlet convolution polynomials. 
\qed

%****************************************************************************************************************
%****************************************************************************************************************
%****************************************************************************************************************

\section{Inversion theorem for Dirichlet series}\label{section_inversion}

%****************************************************************************************************************
%****************************************************************************************************************
%****************************************************************************************************************

For $\theta\in {\mathbb R}$ we denote
${\mathbb C}_{\theta}:=\{ z\in {\mathbb C} \; : \; \re(z)>\theta\}$. We will also denote by ${\mathcal D}$ the set of Dirichlet series of the form $\sum_{n\ge 1} c_n n^{-s}$, which  converge absolutely in some half-plane ${\mathbb C}_{\theta}$, and by ${\mathcal D}_0$
the set of Dirichlet series in ${\mathcal D}$ that have zero constant term (that is, $c_1=0$).

The following theorem is our second main result. It should be considered a Dirichlet series analogue of the Lagrange Inversion Theorem as presented in formulas \eqref{def_B_z_w}-\eqref{def_a_n_x}. This theorem also extends our earlier results in \cite{Kuznetsov}. 
We recall that the hat-operation is defined in \eqref{def_hat_alpha_n}. 
\begin{theorem}\label{thm_functional_eqn}
Fix a Dirichlet series $f \in {\mathcal D}_0$ and $w\in {\mathbb C}$. 
\begin{itemize}
\item[(i)]
The equation 
$f(s-w g(s))=g(s)$ has a unique solution $g \in {\mathcal D}_0$. 
\item[(ii)] Let $\alpha_n(x)$ be the family of Dirichlet convolution polynomials generated by $f$ via 
$$
e^{x f(s)}=\sum\limits_{n\ge 1} \frac{\alpha_n(x)}{n^s}.
$$ 
Then the solution to the equation $f(s-wg(s))=g(s)$ is given by
\begin{equation}\label{def_hat_g}
g(s)=\sum\limits_{n\ge 2} \frac{\hat \alpha_n(w\ln(n))}{n^s}
\end{equation}
and it satisfies
\begin{equation}\label{hat_g_formula2}
e^{x g(s)}=1+x\sum\limits_{n\ge 2}   \frac{\hat \alpha_n(x+w\ln(n))}{n^s}, \;\;\; x\in {\mathbb C}.
\end{equation}
The Dirichlet series in \eqref{def_hat_g} and \eqref{hat_g_formula2} converge absolutely in some half-plane ${\mathbb C}_{\theta}$. 
\end{itemize}
\end{theorem}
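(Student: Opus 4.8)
The plan is to verify directly that the Dirichlet series $g$ proposed in \eqref{def_hat_g} solves the functional equation $f(s - wg(s)) = g(s)$, using the machinery of Dirichlet convolution polynomials—chiefly Theorem \ref{thm1} and the exponential generating identity \eqref{e_Dirichlet_polynomial}—and then to establish uniqueness and convergence separately. The central algebraic identity \eqref{hat_g_formula2} is really the heart of the matter: once it is in hand, both the functional equation and the formula for $g$ fall out by differentiation and coefficient extraction. So I would prove \eqref{hat_g_formula2} first and treat it as the load-bearing step.

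To prove \eqref{hat_g_formula2}, I would apply Theorem \ref{thm1} to the given family $\alpha_n(x)$ to obtain a new family of Dirichlet convolution polynomials
\begin{equation*}
\beta_n(x) = \frac{x}{x + w\ln(n)}\,\alpha_n(x + w\ln(n)), \qquad \hat\beta_n(x) = \hat\alpha_n(x + w\ln(n)).
\end{equation*}
Since $\beta_n(x)$ are Dirichlet convolution polynomials, Proposition \ref{proposition_properties_alpha_n}(iii), or equivalently the generating-series identity $e^{x h(s)} = \sum_{n\ge 1} \beta_n(x) n^{-s}$ with $h(s) = \sum_{n\ge 2} \hat\beta_n(0) n^{-s}$, tells us that they arise as the convolution polynomials generated by some Dirichlet series $h$. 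Now $\hat\beta_n(0) = \hat\alpha_n(w\ln(n))$, so the generating series for $\beta_n$ is precisely the candidate $g(s) = \sum_{n\ge 2} \hat\alpha_n(w\ln(n)) n^{-s}$ of \eqref{def_hat_g}. Therefore $e^{x g(s)} = \sum_{n\ge 1} \beta_n(x) n^{-s} = 1 + \sum_{n\ge 2} \frac{x}{x+w\ln(n)} \alpha_n(x+w\ln(n)) n^{-s} = 1 + x\sum_{n\ge 2} \hat\beta_n(x) n^{-s}$, which is exactly \eqref{hat_g_formula2}. This is the cleanest route: the theorem from Section \ref{section_polynomials} does the combinatorial heavy lifting, and \eqref{hat_g_formula2} is essentially a restatement of what it means for the $\beta_n$ to be convolution polynomials generated by $g$.

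With \eqref{hat_g_formula2} established, the functional equation follows by extracting the linear-in-$x$ term. Differentiating \eqref{hat_g_formula2} in $x$ and setting $x=0$ gives $g(s) = \sum_{n\ge 2} \hat\alpha_n(w\ln(n)) n^{-s}$, confirming \eqref{def_hat_g}. For the functional equation itself, I would compare \eqref{hat_g_formula2} with the generating identity for the original family, $e^{x' f(s')} = \sum_{n\ge 1} \alpha_n(x') (n)^{-s'}$: substituting $s' = s - wg(s)$ and $x' = x$ and recognizing that the shift $n^{-s'} = n^{-s} n^{w g(s)} = n^{-s} e^{w g(s)\ln n}$ converts $\alpha_n(x)$ evaluated at the shifted argument into the $\hat\beta_n$-pattern, one sees that $e^{x f(s - wg(s))}$ reproduces $e^{x g(s)}$; matching the $O(x)$ coefficients then yields $f(s-wg(s)) = g(s)$. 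For \textbf{uniqueness}, I would argue by extracting coefficients $n^{-s}$ recursively: if $g, \tilde g \in \mathcal{D}_0$ both solve the equation, the coefficient of $n^{-s}$ in $f(s - wg(s))$ depends only on coefficients of $g$ at indices strictly dividing $n$ (a smaller-index triangularity coming from $f \in \mathcal D_0$ having no constant term), so an induction on $n$ forces the coefficient sequences to agree.

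The main obstacle I expect is \textbf{convergence}, i.e. showing the formal series in \eqref{def_hat_g} and \eqref{hat_g_formula2} actually converge absolutely in some half-plane $\mathbb{C}_\theta$, rather than being merely formal. The coefficients $\hat\alpha_n(w\ln(n))$ grow as $\ln(n)$ enters the polynomial argument, and since $\alpha_n$ has degree up to $\Omega(n)$ in $x$, the substitution $x = w\ln(n)$ can produce terms as large as $(\ln n)^{\Omega(n)}$ times products of the generating coefficients. I would control this by working in a half-plane where $f$ converges with room to spare: choosing $\theta$ large enough that $\sum_{n\ge 2} |\hat\alpha_n(0)| n^{-\sigma}$ is small for $\sigma > \theta$, and then bounding $|\hat\alpha_n(w\ln n)|$ via the explicit polynomial expansion \eqref{alpha_n_formula2}, which expresses $\alpha_n(x)$ through products $\hat\alpha_{d_1}(0)\cdots\hat\alpha_{d_k}(0)$ over factorizations. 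The logarithmic growth $(\ln n)^k$ is then absorbed by a slightly larger abscissa, since $(\ln n)^k n^{-\epsilon}$ is bounded; summing over $k \le \Omega(n)$ and over factorizations reduces the estimate to the convergence of a shifted version of $f$, giving absolute convergence of $g$ in $\mathbb{C}_\theta$ for suitable $\theta$.
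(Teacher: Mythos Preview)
Your derivation of \eqref{hat_g_formula2} from Theorem~\ref{thm1} is correct and coincides with what the paper does for that formula. Your uniqueness argument via divisor-triangularity is also fine. But your treatment of the functional equation and of convergence diverges from the paper's and, as written, each sketch has a gap.

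\emph{The functional equation.} You propose to substitute $s'=s-wg(s)$ into $e^{xf(s')}=\sum_n \alpha_n(x)n^{-s'}$ and ``recognize'' that the result matches \eqref{hat_g_formula2}. That recognition is not automatic. Expanding $n^{wg(s)}=e^{w\ln(n)g(s)}$ via \eqref{hat_g_formula2} and collecting the coefficient of $N^{-s}$ yields a Dirichlet-convolution sum $\sum_{d\mid N,\,d\ge 2}\hat\alpha_d(0)\,\beta_{N/d}(w\ln d)$, and this collapses to $\hat\alpha_N(w\ln N)$ only after invoking the identity \eqref{convolution_eqn} of Proposition~\ref{proposition_properties_alpha_n}(v). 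The paper performs exactly this computation (at the truncated level) in Lemma~\ref{lemma3}; you have the right shape of argument but have suppressed the one nontrivial step.

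\emph{Convergence.} The bound you describe does not work as stated. You want to absorb $(\ln n)^k$ into $n^{-\epsilon}$, but $k$ ranges up to $\Omega(n)$, and for $n=2^m$ one has $(\ln n)^{\Omega(n)}=(\ln n)^{\log_2 n}=n^{(\ln\ln n)/\ln 2}$, which outgrows every fixed power $n^{\epsilon}$. A correct estimate is available (e.g.\ replace $f$ by $\tilde f=\sum|\hat\alpha_n(0)|n^{-s}$, use that each term of $e^{x\tilde f(s_0)}$ dominates $\tilde\alpha_n(x)n^{-s_0}$, and set $x=|w|\ln n$ to get $|\alpha_n(w\ln n)|\le n^{s_0+|w|\tilde f(s_0)}$), but it is not the argument you wrote.

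\emph{Comparison with the paper.} The paper reverses your order of operations and thereby avoids both difficulties. It first proves existence and uniqueness of an analytic solution $g\in\mathcal D_0$ by a contraction-mapping argument in the Hardy space $\mathcal H^{\infty}$ (Lemma~\ref{lemma1}), so convergence is built in from the start and never has to be estimated coefficientwise. It then identifies the coefficients of this $g$ by showing, via Lemma~\ref{lemma3} together with a second contraction estimate, that the truncated explicit series $g_N$ agrees with $g$ up to $o(N^{-s})$. Your route---formal verification plus a separate growth bound---can be completed with the fixes above and is more elementary, needing no Banach-space input; the paper's route is cleaner analytically but relies on a nontrivial composition theorem for $\mathcal H^{\infty}$.
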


\vspace{0.25cm}
We will begin proving Theorem \ref{thm_functional_eqn} after we make several remarks. 

\vspace{0.25cm}
First of all, we would like to point out that the condition that the Dirichlet series $f$ has zero constant term $c_1$ is not  restrictive. Indeed, assume that we want to solve equation $f(s+g(s))=g(s)$ where $f =\sum_{n\ge 1} c_n n^{-s}$ belongs to ${\mathcal D}$ and $c_1\neq 0$ (so that $f \notin {\mathcal D}_0$ and Theorem \ref{thm_functional_eqn} is not directly applicable). We construct the function $F(s):= f(s-wc_1)-c_1$ that clearly belongs to ${\mathcal D}_0$. Now we can apply Theorem \ref{thm_functional_eqn} and 
find $G$ that solves the equation $F(s-w  G(s))= G(s)$, and then we can recover the solution  to the original equation $f(s-w  g(s))= g(s)$ via 
$g(s)=c_1 +  G(s)$.

Second, comparing 
\eqref{def_hat_g}, \eqref{hat_g_formula2} and \eqref{def_beta_n}, we see that $g$ is the generating Dirichlet series for the Dirichlet convolution polynomials 
\begin{equation}\label{def_beta_again}
\beta_n(x)= \frac{x}{x+w \ln(n)} \alpha_n(x+w\ln(n)). 
\end{equation}
In particular, we could write formulas \eqref{def_hat_g} and \eqref{hat_g_formula2} in the form
$$
g(s)=\sum\limits_{n\ge 2} \frac{\hat \beta_n(0)}{ n^{s}}, \;\;\;  e^{ xg(s)}=\sum\limits_{n\ge 1} \frac{\beta_n(x)}{ n^{s}}.
$$

Finally, the functions $f$ and $g$ in Theorem \ref{thm_functional_eqn} also satisfy the functional equation
\begin{equation}\label{functional_equation2}
g(s+w  f(s))=f(s). 
\end{equation}
This result follows at once by applying Theorem \ref{thm_functional_eqn} to Dirichlet series $g$ and Dirichlet polynomials $\beta_n(x)$ 
and changing $w\mapsto -w$. Note that if $\beta_n(x)$ is defined as in \eqref{def_beta_again}, then 
$$
\frac{x}{x-w \ln(n)} \beta_n(x-w\ln(n))=\alpha_n(x). 
$$

Before we are ready to prove Theorem \ref{thm_functional_eqn}, we need to establish several auxiliary results.

\begin{lemma}\label{lemma1}
For every $f \in {\mathcal D}_0$ the equation $f(s+g(s))=g(s)$ has a unique solution $g \in {\mathcal D}_0$. 
\end{lemma}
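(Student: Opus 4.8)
The plan is to recast $f(s+g(s))=g(s)$ as a fixed-point equation in a Banach algebra of absolutely convergent Dirichlet series, to solve it by the contraction mapping principle, and to read off uniqueness from a triangular recursion for the coefficients.

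First I would settle uniqueness at the level of coefficients. Writing $f(s)=\sum_{n\ge 2}c_n n^{-s}$ and a candidate solution $g(s)=\sum_{m\ge 2}b_m m^{-s}$, I would use the expansion
\[
f(s+g(s))=\sum_{n\ge 2}c_n n^{-s}e^{-g(s)\ln n}=\sum_{n\ge 2}c_n n^{-s}\sum_{k\ge 0}\frac{(-\ln n)^k}{k!}g(s)^k,
\]
which is legitimate wherever the series converge absolutely. Collecting the coefficient of $m^{-s}$ on the right, every nontrivial contribution carries a factor $n^{-s}$ with $n\ge 2$, so the remaining Dirichlet-convolution index equals $m/n<m$; consequently that coefficient equals $c_m$ plus a polynomial in $\{b_j:2\le j<m\}$, and $b_m$ itself never appears on the right. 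Hence the equation is equivalent to a triangular recursion $b_m=c_m+P_m(b_2,\dots,b_{m-1})$, which determines the $b_m$ uniquely. This already shows that a solution in $\mathcal{D}_0$, if it exists and converges absolutely, must be unique.

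For existence I would work in the space $\mathcal{A}_\sigma$ of Dirichlet series $h=\sum_n h_n n^{-s}$ with $\|h\|_\sigma:=\sum_n|h_n|n^{-\sigma}<\infty$. This is a Banach algebra for Dirichlet convolution, since $\|h_1h_2\|_\sigma\le\|h_1\|_\sigma\|h_2\|_\sigma$, and I would set $T(g):=f(s+g(s))$ on the subspace $\mathcal{A}_\sigma^0$ of series with vanishing constant term (note $T(g)\in\mathcal{A}_\sigma^0$ because every term carries a factor $n^{-s}$ with $n\ge 2$). The two estimates I need are, using $\|n^{-s}h\|_\sigma=n^{-\sigma}\|h\|_\sigma$ and $\|e^{-g\ln n}\|_\sigma\le n^{\|g\|_\sigma}$,
\[
\|T(g)\|_\sigma\le\|f\|_{\sigma-\|g\|_\sigma},\qquad \|T(g_1)-T(g_2)\|_\sigma\le\|g_1-g_2\|_\sigma\sum_{n\ge 2}|c_n|(\ln n)\,n^{-(\sigma-\delta)},
\]
the second being valid when $\|g_1\|_\sigma,\|g_2\|_\sigma\le\delta$. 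Since $f$ has no constant term, both $\|f\|_\tau$ and $\sum_{n\ge 2}|c_n|(\ln n)n^{-\tau}$ tend to $0$ as $\tau\to\infty$. Fixing $\delta=1$ and then choosing $\sigma$ large enough that $\sigma-\delta$ exceeds the abscissa of absolute convergence of $f$, that $\|f\|_{\sigma-\delta}\le\delta$, and that the sum in the second estimate is $<1$, I obtain that $T$ maps the closed ball $\{g\in\mathcal{A}_\sigma^0:\|g\|_\sigma\le\delta\}$ into itself and is a contraction there. Banach's fixed point theorem then yields a solution $g\in\mathcal{A}_\sigma^0\subset\mathcal{D}_0$, which by the previous paragraph is the unique one.

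The step I expect to be the main obstacle is controlling the composition $f(s+g(s))$ in norm. The device that makes everything work is the identity $f(s+g(s))=\sum_{n\ge 2}c_n n^{-s}e^{-g(s)\ln n}$ together with the bound $\|e^{-g\ln n}\|_\sigma\le n^{\|g\|_\sigma}$: this converts the analytically awkward operation of shifting $s$ by $g(s)$ into a clean shift of the abscissa by $\|g\|_\sigma$, after which both the self-map and the contraction properties reduce to the decay of $\|f\|_\tau$ as $\tau\to\infty$, a decay that is exactly what the hypothesis $c_1=0$ provides.
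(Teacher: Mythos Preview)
Your argument is correct, and it follows a genuinely different route from the paper. The paper works in the Hardy space $\mathcal{H}^\infty$ of bounded Dirichlet series with the sup norm on $\mathbb{C}_0$; to know that the iterates $G_k(s)=F(s+\theta+G_{k-1}(s))$ remain Dirichlet series it invokes an external composition theorem (Bayart), and then shows $\{G_k\}$ is Cauchy via the estimate $|f'|<1/2$ obtained from an integral over a line segment. Uniqueness is likewise analytic, via the same $1/2$--Lipschitz bound. By contrast, you work in the weighted $\ell^1$ Banach algebra $\mathcal{A}_\sigma$ and handle the composition entirely algebraically through $f(s+g(s))=\sum_{n\ge 2}c_n n^{-s}e^{-g(s)\ln n}$, reducing both the self-map and the contraction estimates to the elementary bounds $\|e^{-g\ln n}\|_\sigma\le n^{\|g\|_\sigma}$ and $\|e^{-g_1\ln n}-e^{-g_2\ln n}\|_\sigma\le (\ln n)\,n^{\delta}\|g_1-g_2\|_\sigma$; no outside composition result is needed, and the limit is automatically a Dirichlet series. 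Your uniqueness argument via the triangular recursion $b_m=c_m+P_m(b_2,\dots,b_{m-1})$ is also more elementary and in fact stronger, since it gives uniqueness already at the level of formal Dirichlet series. What the paper's approach buys is a direct link to the analytic function-theoretic literature on $\mathcal{H}^\infty$; what yours buys is a fully self-contained and coefficient-level proof.
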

\begin{proof}
Let  ${\mathcal H}^{\infty}$ be the Banach space of Dirichlet series $f(s)=\sum_{n\ge 1} c_n n^{-s}$ that converge and are bounded on 
${\mathbb C}_0$, endowed with the norm   
$$
||f||_{\infty}=\sup \{ |f(s)| \; : s \in {\mathbb C}_0 \}.
$$
See \cite{Bayart,Choi} for properties of ${\mathcal H}^{\infty}$ and related Banach spaces. 

In the proof we will need the following result about composition of Dirichlet series. For the proof of this result see \cite{Bayart} (Theorem 11 and remark after Corollary 2).  See also \cite{gordon1999} for characterization of composition operators acting on a certain Hilbert space of Dirichlet series. 
 
\vspace{0.25cm}
\noindent 
{\bf Fact 1:} Let $\phi$ be a Dirichlet series in ${\mathcal D}$ that has an analytic extension 
to ${\mathbb C}_0$ and satisfies $s+\phi(s) \in {\mathbb C}_0$ for all $s \in {\mathbb C}_0$. Then for every 
$f \in {\mathcal H}^{\infty}$ the function 
$f(s+\phi(s))$ also belongs to ${\mathcal H}^{\infty}$.

\vspace{0.25cm} 

The condition $f\in {\mathcal D}_0$ implies that the Dirichlet series $f$ converges absolutely in some half-plane ${\mathbb C}_{\tilde \theta}$ and satisfies
$f(s)=O(2^{-\re(s)})$ and $f'(s)=O(2^{-\re(s)})$ as $\re(s)\to +\infty$.  
Thus we can find $\theta>1$ large enough such that the Dirichlet series  $f$ converges absolutely in ${\mathbb C_{\theta}}$ and satisfies $|f(s)|<1$ and $|f'(s)|<1/2$ for $s \in {\mathbb C}_{\theta}$.  

Denote $G(s)=g(s+2\theta)$ and $F(s)=f(s+\theta)$. Functions $f$ and $g$ satisfy $f(s+g(s))=g(s)$ for $s\in {\mathbb C}_{2\theta}$ if and only if functions
$F$ and $G$ satisfy
\begin{equation}\label{new_eqn}
F(s+\theta+G(s))=G(s)
\end{equation}
for $s \in {\mathbb C}_0$. 
Our first goal is to prove that the equation \eqref{new_eqn} has a solution $G \in {\mathcal H}^{\infty}$.

For $s\in {\mathbb C}_0$ we define $G_0(s)=F(s+\theta)$ and 
$$
G_{k}(s)=F(s+\theta+G_{k-1}(s)), \;\;\; k\ge 1.
$$ 
Due to our choice of $\theta$ above, both functions $F$ and $F'$ lie in  ${\mathcal H}^{\infty}$ and 
satisfy
$|| F ||_{\infty}\le 1$ and $||F'||_{\infty}\le 1/2$. This implies that  $G_0 \in {\mathcal H}^{\infty}$ and 
$||G_0||_{\infty}\le 1$. 
We will prove by induction that the same conditions hold for all $G_k$ with $k\ge 1$. Assume that for some $k$ we have 
$G_k \in  {\mathcal H}^{\infty}$ and $||G_k||_{\infty}\le 1$.
Then the function $s\mapsto s+\theta + G_{k}(s)$ maps ${\mathbb C}_0$ into ${\mathbb C}_0$ (since $|\re(G_k(s))|\le |G_k(s)|\le 1<\theta$ for $s\in {\mathbb C}_0$, thus $\re(\theta+G_k(s))>0$). We can apply Fact 1 and conclude that $G_{k+1} \in {\mathcal H}^{\infty}$. We also have 
$||G_{k+1}||_{\infty}\le 1$, since for all $s\in {\mathbb C}_0$ we have $s+\theta + G_{k}(s)\in {\mathbb C}_0$ and
$$
|G_{k+1}(s)|=|F(s+\theta+G_{k}(s))|\le ||F||_{\infty}\le 1. 
$$

Next, we write for $s\in {\mathbb C}_0$
\begin{equation}\label{g_k_bound1}
G_{k+1}(s)-G_k(s)=F(s+\theta+G_k(s))-F(s+\theta+G_{k-1}(s))=\int\limits_{[z_1,z_2]} F'(z) {\textrm{d}} z,
\end{equation}
where $z_1=s+\theta+G_{k-1}(s)$, $z_2=s+\theta+G_{k}(s)$ and the integration is over the contour
$$
[z_1,z_2]:=\{z=t z_1+(1-t) z_2, \;\;\; 0\le t \le 1\},
$$ 
which is just the straight line interval connecting points $z_1$ and $z_2$. Since $z_1$ and $z_2$ both lie in the half-plane ${\mathbb C}_0$, we have $[z_1,z_2] \subset {\mathbb C}_0$ and we conclude that
\begin{equation}\label{g_k_bound2}
\Big | \int_{z_1}^{z_2} F'(z) {\textrm{d}} z \Big| \le \max\{ |F'(z)| \; : \; z\in [z_1,z_2]\} \times |z_2-z_1|\le 
||F'||_{\infty} \times |z_2-z_1|\le \frac{1}{2} |z_2-z_1|.  
\end{equation}
Since $z_2-z_1=G_k(s)-G_{k-1}(s)$ we obtain from \eqref{g_k_bound1} and \eqref{g_k_bound2}
$$
|G_{k+1}(s)-G_k(s)|\le  \frac{1}{2} |G_k(s)-G_{k-1}(s)|,
$$ 
for all $s\in {\mathbb C}_0$. 
Thus 
$$
||G_{k+1}-G_k||_{\infty}\le  \frac{1}{2} ||G_k-G_{k-1}||_{\infty}
$$
 and $\{G_k\}_{k\ge 0}$ form a Cauchy sequence in ${\mathcal H}^{\infty}$. Thus the limit $G(s)=\lim_{k\to \infty} G_k(s)$ exists and $G \in {\mathcal H}^{\infty}$. For each $s\in {\mathbb C}_0$ we take the limit as $k\to +\infty$ in the identity  $G_{k}(s)=F(z+\theta+G_{k-1}(s))$ and obtain 
$$
G(s)=F(s+\theta+G(s)), \;\;\; \re(s)>0. 
$$
As we discussed above, the existence of a solution $G \in {\mathcal H}^{\infty}$ of the above equation implies the existence of the Dirichlet series 
$g$ that converges in ${\mathbb C}_{2\theta}$ and solves equation $f(s+g(s))=g(s)$. Since the Dirichlet series $g$ converges in ${\mathbb C}_{2\theta}$, it converges absolutely in 
${\mathbb C}_{2\theta+1}$ and we  conclude that $g\in {\mathcal D}$. Since $f(s) \to 0$ as $\re(s)\to  +\infty$ it is also true that 
$g(s)=f(s+g(s)) \to 0$ as $\re(s) \to +\infty$, thus $g \in {\mathcal D}_0$. This ends the proof of existence (in the space 
${\mathcal D}_0$) of a solution $g$ to the equation  $f(s+g(s))=g(s)$.

Now we will prove uniqueness of the solution. Assume that there exists another function $\tilde g \in {\mathcal D}_0$ that also
satisfies $f(s+\tilde g(s))=\tilde g(s)$. Since $\tilde g \in {\mathcal D}_0$, the Dirichlet series for $\tilde g$ converges absolutely for all $\re(s)$ large enough and we have $|\tilde g(s)|<1$ for all $\re(s)$ large enough. 
In particular, for all $\re(s)$ large enough we have $s+\tilde g(s) \in {\mathbb C}_{\theta}$ and $s+g(s) \in {\mathbb C}_{\theta}$. Recall that $|f'(s)|<1/2$ for $s\in {\mathbb C}_{\theta}$. Using the same method of estimate as we used in \eqref{g_k_bound1} and \eqref{g_k_bound2} we obtain
for all $\re(s)$ large enough
$$
|g(s)-\tilde g(s)|=|f(s+ g(s))-f(s+\tilde g(s))|\le \sup\{ |f'(z)| \; : \; z\in {\mathbb C}_{\theta}\} \times |g(s)-\tilde g(s)|\le \frac{1}{2} |g(s)-\tilde g(s)|. 
$$ 
Therefore $\tilde g(s)=g(s)$ for all $\re(s)$ large enough and the solution
$g$ to the equation $f(s+g(s))=g(s)$ is unique in ${\mathcal D}_0$. 
\end{proof}

Everywhere in this paper we will write $f(s)=o(m^{-s})$ if $f$ is defined in some half-plane ${\mathbb C}_{\theta}$ and satisfies 
$|f(s)|m^{\re(s)} \to 0$ as $\re(s)\to +\infty$. Note that a Dirichlet series 
$\phi(s)=\sum_{n\ge 1} c_n n^{-s} \in {\mathcal D}$ satisfies $\phi(s)=o(N^{-s})$ 
if and only if $c_i=0$ for $1\le i \le N$.

\begin{lemma}\label{lemma3}
Let $\alpha_n(x)$ be Dirichlet convolution polynomials.
Fix $w\in {\mathbb C}$, an integer $N\ge 2$ and define
\begin{equation}\label{def_hat_f_N_hat_g_N}
f_N(s):=\sum\limits_{n=2}^N \frac{\hat \alpha_n(0)}{n^s}, \;\;\; 
g_N(s):=\sum\limits_{n=2}^N \frac{\hat \alpha_n(w\ln(n))}{n^s}. 
\end{equation}
Then $f_N(s-w  g_N(s))-g_N(s)=o(N^{-s})$.
\end{lemma}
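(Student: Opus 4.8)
The plan is to expand $f_N(s-wg_N(s))$ as a Dirichlet series, collect the coefficient of $k^{-s}$ for each $k\le N$, and check that it agrees with the corresponding coefficient of $g_N(s)$; the difference then carries only terms of index exceeding $N$, which is exactly the meaning of being $o(N^{-s})$. Working in a half-plane where $\re(s)$ is large (so that $g_N(s)\to 0$ and $s-wg_N(s)$ stays where everything converges), I would first write
$$f_N(s-wg_N(s)) = \sum_{n=2}^N \hat\alpha_n(0)\, n^{-s}\, n^{w g_N(s)}, \qquad n^{wg_N(s)} = e^{w\ln(n)\, g_N(s)}.$$

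The key structural observation is that, by Theorem \ref{thm1}, the functions $\beta_n(x)=\tfrac{x}{x+w\ln n}\alpha_n(x+w\ln n)$ are Dirichlet convolution polynomials with $\hat\beta_n(0)=\hat\alpha_n(w\ln n)$, so $g_N(s)=\sum_{n=2}^N \hat\beta_n(0)\,n^{-s}$ is precisely the truncated generating series of the family $\beta_n$. Applying formula \eqref{e_Dirichlet_polynomial} to this $\beta$-family with $x=w\ln(n)$ yields
$$e^{w\ln(n)\, g_N(s)} = \sum_{m=1}^N \beta_m(w\ln n)\, m^{-s} + o(N^{-s}).$$
Substituting this back and multiplying out, the error terms are each multiplied by $n^{-s}$ with $n\ge 2$ and so remain $o(N^{-s})$, while the genuine products $(nm)^{-s}$ with $nm>N$ are also $o(N^{-s})$. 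Collecting the coefficient of $k^{-s}$ for $2\le k\le N$ (the index is automatically $\ge 2$ since $n\ge 2$), I obtain
$$f_N(s-wg_N(s)) = \sum_{k=2}^N \Big(\sum_{\substack{d|k\\ d\ge 2}} \hat\alpha_d(0)\,\beta_{k/d}(w\ln d)\Big) k^{-s} + o(N^{-s}).$$

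Since $g_N(s)=\sum_{k=2}^N \hat\alpha_k(w\ln k)\,k^{-s}$, it remains to verify that the inner sum equals $\hat\alpha_k(w\ln k)$ for each $2\le k\le N$. This is where the real content sits. Using the identity $\beta_{k/d}(w\ln d)=\tfrac{\ln d}{\ln k}\alpha_{k/d}(w\ln k)$, which follows directly from the definition of $\beta$ together with $\ln d+\ln(k/d)=\ln k$, the inner sum becomes $\tfrac{1}{\ln k}\sum_{d|k,\,d\ge 2}\ln(d)\,\hat\alpha_d(0)\,\alpha_{k/d}(w\ln k)$, and by the convolution identity \eqref{convolution_eqn} applied with $n=k$ and $x=w\ln k$ this equals $\hat\alpha_k(w\ln k)$, as required.

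I expect the main obstacle to be two-fold: first, recognizing that the log-weighted convolution identity \eqref{convolution_eqn} is exactly the tool that closes the final coefficient comparison, after one rewrites $\beta_{k/d}(w\ln d)$ in terms of $\alpha_{k/d}(w\ln k)$; and second, the careful bookkeeping needed to certify that every discarded quantity (both the $o(N^{-s})$ remainders coming from \eqref{e_Dirichlet_polynomial} and the cross-terms with $nm>N$) truly carries index larger than $N$. The hypothesis $n\ge 2$ in the definition of $f_N$ is essential to this bookkeeping, since it forces every surviving product to have index at least $2$ and pushes the error contributions past index $N$.
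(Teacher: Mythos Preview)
Your proof is correct and follows essentially the same route as the paper's own argument: expand $f_N(s-wg_N(s))$ via $n^{wg_N(s)}=e^{w\ln(n)g_N(s)}$, invoke \eqref{e_Dirichlet_polynomial} for the $\beta$-family to replace the exponential by a truncated Dirichlet sum plus an $o(N^{-s})$ remainder, collect coefficients, and then close the identity by rewriting $\beta_{k/d}(w\ln d)$ in terms of $\alpha_{k/d}(w\ln k)$ and applying \eqref{convolution_eqn}. Your additional remarks on the error bookkeeping are accurate and only make explicit what the paper leaves implicit.
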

\begin{proof}
Let $\beta_n(x)$ be the Dirichlet convolution polynomials defined via 
\eqref{def_beta_n}. Then $\hat \beta_n(0)=\hat \alpha_n(w\ln(n))$ and applying Proposition \ref{proposition_properties_alpha_n}(iii) 
to Dirichlet convolution polynomials $\beta_n(x)$ we obtain 
\begin{equation*}
e^{x  g_N(s)}=\sum\limits_{m=1}^N \frac{\beta_m(x)}{m^s} + o(N^{-s}). 
\end{equation*}
Using this fact we compute 
\begin{align}\label{lemma3_proof1}
f_N(s-w  g_N(s))&=\sum\limits_{k=2}^N \frac{\hat \alpha_k(0)}{k^s} e^{w \ln(k)  g_N(s)}
= \sum\limits_{k=2}^N \frac{\hat \alpha_k(0)}{k^s} \sum\limits_{m=2}^N \frac{\beta_m(w\ln(k))}{m^s}+o(N^{-s})\\
\nonumber
&=\sum\limits_{n=2}^N  \Big[ \sum\limits_{d|n, d\ge 2} \hat\alpha_d(0) \beta_{\frac{n}{d}}(w \ln(d)) \Big] \times \frac{1}{n^{s}} +o(N^{-s}).  
\end{align}
Formula \eqref{def_beta_n} implies
$$
\beta_{\frac{n}{d}}(w\ln(d))=\frac{w\ln(d)}{w\ln(d) + w\ln(\frac{n}{d})} \alpha_{\frac{n}{d}}(w\ln(d) + w\ln(\frac{n}{d}))=\frac{\ln(d)}{\ln(n)} \alpha_{\frac{n}{d}}(w\ln(n)).
$$
Using this result and identity \eqref{convolution_eqn} we simplify the expression in the square brackets in \eqref{lemma3_proof1}
\begin{align*}
\sum\limits_{d|n, d\ge 2}\hat\alpha_d(0) \beta_{\frac{n}{d}}(w\ln(d))=
\frac{1}{\ln(n)}\sum\limits_{d|n, d\ge 2}\ln(d)\hat\alpha_d(0) \alpha_{\frac{n}{d}}(w\ln(n))
=\hat\alpha_n(w \ln(n)).
\end{align*}
The above formula and equation \eqref{lemma3_proof1} give us the desired result: 
$$
f_N(s-w g_N(s))=\sum\limits_{n=2}^N \frac{\hat \alpha_n(w \ln(n))}{n^{s}}+o(N^{-s})=
 g_N(s)+o(N^{-s}). 
$$
\end{proof}

\vspace{0.25cm}
\noindent
{\bf Proof of Theorem \ref{thm_functional_eqn}:}
Assume that $w\neq 0$ (otherwise the functional equation $f(s-wg(s))=g(s)$ becomes a trivial equation $f(s)=g(s)$ and there is nothing to prove). 
We want to find  $g(s)$ that solves the equation $f(s-w g(s))=g(s)$. An equivalent problem is to 
 find $G(s):=-wg(s)$ that satisfies the equation $F(s+G(s))=g(s)$, where $F(s):=-w f(s)$. Applying Lemma \ref{lemma1} to this latter equation we see that such a solution $G$ exists and is unique in ${\mathcal D}_0$. Therefore the solution $g(s)$ to $f(s-wg(s))=g(s)$ exists and is unique in ${\mathcal D}_0$. 
 
Next, we fix a large positive integer $N$ and define $f_N$ and $g_N$ as in \eqref{def_hat_f_N_hat_g_N}. Our goal is to show that 
$g(s)- g_N(s)=o(N^{-s})$: this would imply that the first $N$ coefficients in the Dirichlet series of $g$ coincide with the first $N$ coefficients in the Dirichlet series of $g_N$. Since $N$ is arbitrary, this would prove identity \eqref{def_hat_g}.  

We choose $\theta \in \r $ large enough such that the following conditions hold
\begin{itemize}
\item[(i)] $f$ converges absolutely in ${\mathbb C}_{\theta-1}$;
\item[(ii)] $|w  g(s)|<1$ and $|w g_N(s)|<1$ for $s\in {\mathbb C}_{\theta}$;
\item[(iii)] $|w f'(s)|<1/2$ for $s\in {\mathbb C}_{\theta-1}$.
\end{itemize} 

First of all, we note that $f_N(s)-f(s)=o(N^{-s})$. Due to condition (ii) above we have  
$$
f_N(s-w  g_N(s))= f(s-w  g_N(s))+o(N^{-s}). 
$$
Thus we have two equations 
\begin{align*}
g(s)&= f(s-w  g(s)), \\
g_N(s)&= f(s-w  g_N(s)) + o(N^{-s}), 
\end{align*}
that hold for $s\in {\mathbb C}_{\theta}$. 
Subtracting the second equation from the first we obtain
\begin{equation*}
g(s)-g_N(s)=f(s-w  g(s))- f(s-w  g_N(s))+o(N^{-s}), \;\;\; s \in {\mathbb C}_{\theta}. 
\end{equation*}
 Condition (ii) above implies that for $s\in {\mathbb C}_{\theta}$ both points $z_1=s-w g(s)$ and $z_2=s-w g_N(s)$ lie in 
${\mathbb C}_{\theta-1}$, thus we can apply the same method of estimate as we used in \eqref{g_k_bound1} and \eqref{g_k_bound2} and obtain
\begin{align*}
|g(s)-g_N(s)|&\le |f(s-w  g(s))- f(s-w  g_N(s))|+|o(N^{-s})|  \\
& \le \sup\{ |f'(z)| \; : \; z\in {\mathbb C}_{\theta-1}\} \times |w| \times  |g(s)- g_N(s)|+o(N^{-s})\\
& \le \frac{1}{2} |g(s)- g_N(s)|+o(N^{-s}).
\end{align*}
The above inequality implies $g(s)-g_N(s)=o(N^{-s})$.  This ends the proof of 
formula \eqref{def_hat_g}. Formula \eqref{hat_g_formula2} follows at once from \eqref{def_hat_g} and Theorem \ref{thm1}. 
\qed

\vspace{0.25cm}

We would like to point out that Lagrange Inversion Theorem for power series follows from 
Theorem \ref{thm_functional_eqn}. Indeed, starting with a power series $A(z)=1+\sum_{k\ge 2} c_k z^k$ we construct convolution polynomials $a_k(x)$ via 
\begin{equation}\label{last_formula}
A(z)^x=\sum\limits_{k\ge 1} a_k(x) z^k. 
\end{equation}
Changing variables $z=2^{-s}$ gives us a Dirichlet series $A(2^{-s})$ and we define $f(s)=\ln(A(2^{-s}))$. Note that $A$ is analytic near $z=0$ if and only if $f \in {\mathcal D}_0$.  From \eqref{last_formula} we  find  
$$ 
e^{x f(s)}=\sum\limits_{k\ge 1} \frac{a_k(x)}{2^{ks}},
$$
thus the Dirichlet convolution polynomials generated by $f$ are   
\begin{align*}
\alpha_n(x)=
\begin{cases}
a_k(x), \;\;\; {\textrm {if }} \; n=2^k, \\
0, \;\;\qquad {\textrm {otherwise}}. 
\end{cases}
\end{align*} 
Applying Theorem \ref{thm_functional_eqn} to $f$ and then changing variables from $s$ back to $z=2^{-s}$ would give us the Lagrange inversion theorem as presented in formulas \eqref{def_B_z_w}-\eqref{def_a_n_x}.

Next we turn our attention to the question of determining the abscissa of convergence of the Dirichlet series $g$ that solves the equation 
$f(s-w g(s))=g(s)$. In the next result we provide a partial answer to this problem: we focus on the simple case when the Dirichlet series 
$f$ has positive coefficients and $w>0$. For a Dirichlet series $h$ we denote  its abscissa of absolute convergence by $\sigma^h_a$. 

\begin{proposition}\label{prop_abscissa}
 Assume the Dirichlet series $f=\sum_{n\ge 2} c_n n^{-s} \in {\mathcal D}_0$ has nonnegative coefficients $c_n$ and $\sigma^f_a>-\infty$. Fix $w>0$ and let $g \in {\mathcal D}_0$ be the solution of the equation $f(s-wg(s))=g(s)$. Then the Dirichlet series $g$ also has nonnegative coefficients and has abscissa of absolute convergence
\begin{equation}
\sigma_a^g=\min\limits_{s \ge \sigma_a^f} (s + w  f(s)).  
\end{equation}
In particular,  
\begin{itemize}
\item[(i)] if $f'(\sigma_a^f+)<-1/w$ then $\sigma_a^g=s_0+w f(s_0)$ where $s_0>\sigma_a^f$ is the unique solution of $f'(s)=-1/w$;
\item[(ii)] If $f'(\sigma_a^f+)\ge -1/w$ then $\sigma_a^g=\sigma_a^f+wf(\sigma_a^f)$. 
\end{itemize}
\end{proposition}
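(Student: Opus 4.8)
The plan is to combine the explicit description of $g$ from Theorem \ref{thm_functional_eqn}, the ``inverse'' functional equation \eqref{functional_equation2}, and Landau's theorem for Dirichlet series with nonnegative coefficients. First I would settle nonnegativity of the coefficients of $g$. By Theorem \ref{thm_functional_eqn} the $n$-th coefficient of $g$ equals $\hat \alpha_n(w\ln(n))$, and Proposition \ref{proposition_properties_alpha_n}(ii) writes $\alpha_n(x)$ as a sum of terms $\tfrac{x^k}{k!}\hat\alpha_{d_1}(0)\cdots\hat\alpha_{d_k}(0)$. Since $\hat\alpha_d(0)=c_d\ge 0$ and $x=w\ln(n)>0$ for $n\ge 2$, $w>0$, every term is nonnegative, so $\alpha_n(w\ln(n))\ge 0$ and hence the coefficient $\hat\alpha_n(w\ln(n))=\alpha_n(w\ln(n))/(w\ln(n))\ge 0$. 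In particular $f$ and $g$ are real, positive, decreasing and convex on the real parts of their half-planes of convergence.

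The heart of the argument is a parametrization coming from \eqref{functional_equation2}, namely $g(s+wf(s))=f(s)$. For real $s>\sigma_a^f$ set $u(s):=s+wf(s)$; then $g(u(s))=f(s)$, so the graph of $g$ is parametrized by that of $f$. Because $f$ has nonnegative coefficients it is convex, so $u'(s)=1+wf'(s)$ is increasing, rising from $1+wf'(\sigma_a^f+)$ up to $1$ as $\re(s)\to+\infty$. Thus $h(s):=s+wf(s)$ is convex, and its minimum over $[\sigma_a^f,\infty)$ is attained either at an interior critical point $s_0$ with $f'(s_0)=-1/w$ (precisely when $f'(\sigma_a^f+)<-1/w$, case (i), where uniqueness of $s_0$ follows from $f''>0$), or at the left endpoint $\sigma_a^f$ (when $f'(\sigma_a^f+)\ge -1/w$, case (ii)). Either way the minimal value equals $\sigma^\ast:=\min_{s\ge\sigma_a^f}(s+wf(s))$. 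One clean point I would record is that case (ii) forces $f$ to converge at $\sigma_a^f$: since $\ln(n)\ge\ln 2$ we have $-f'(s)\ge \ln(2)f(s)$, so divergence of $f$ at $\sigma_a^f$ would force $f'(\sigma_a^f+)=-\infty<-1/w$, i.e. case (i); hence in case (ii) the value $f(\sigma_a^f)$ and therefore $\sigma^\ast$ are finite.

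I would then show that $\sigma^\ast$ is precisely the rightmost real singularity of $g$. On the branch $s>s^\ast$ (with $s^\ast=s_0$ in case (i) and $s^\ast=\sigma_a^f$ in case (ii)) the map $u$ is strictly increasing and real-analytic, with $u'>0$ and $f$ analytic, and it carries $(s^\ast,\infty)$ bijectively onto $(\sigma^\ast,\infty)$ (note $u(s)\ge s\to\infty$); hence $g(u)=f(s(u))$ is real-analytic on $(\sigma^\ast,\infty)$, which is the analytic continuation of $g$ along the reals down to $\sigma^\ast$. At $u=\sigma^\ast$ this continuation is singular: in case (i) because $u''(s_0)=wf''(s_0)>0$ gives a square-root branch point, through which $f'(s_0)=-1/w\neq 0$ passes so that $g$ genuinely inherits it; in case (ii) because $s(u)\to\sigma_a^f$, where $f$ is singular by Landau's theorem applied to $f$. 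Finally I would close with Landau's theorem for $g$: as $g$ has nonnegative coefficients, $\sigma_a^g$ is a singularity of $g$ and $g$ is analytic for $\re(s)>\sigma_a^g$. If $\sigma_a^g>\sigma^\ast$ this contradicts analyticity of $g$ on $(\sigma^\ast,\infty)$, while if $\sigma_a^g<\sigma^\ast$ then $\sigma^\ast$ would lie in the half-plane of convergence, contradicting that it is a singularity; therefore $\sigma_a^g=\sigma^\ast$, and items (i)--(ii) merely record where the defining minimum is attained.

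The main obstacle I expect is the singularity analysis at $u=\sigma^\ast$: one must verify that the breakdown of the parametrization is a genuine singularity of $g$ rather than an artifact, handle the boundary subcase $f'(\sigma_a^f+)=-1/w$ where the endpoint is simultaneously a critical point, and deploy Landau's theorem in both directions to turn the two inequalities into the desired equality.
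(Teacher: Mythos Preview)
Your proposal is correct and follows essentially the same route as the paper. The only cosmetic difference is that the paper sets $G(s):=s-wg(s)$ and observes that $G$ is the functional inverse of $F(s):=s+wf(s)$, whereas you invoke the inverse functional equation \eqref{functional_equation2} to write $g(u(s))=f(s)$ with $u(s)=s+wf(s)$; unwinding shows these are the same parametrization, and both proofs then carry out the identical convexity/minimum analysis of $s+wf(s)$, the same case split on the sign of $1+wf'(\sigma_a^f+)$, the same branch-point versus Landau-for-$f$ singularity arguments at the minimum, and the same closing appeal to Landau's theorem for $g$.
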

\begin{proof}
According to Theorem \ref{thm_functional_eqn}, the unique solution in ${\mathcal D}_0$ to the equation $f(s-wg(s))=g(s)$ exists and is given by Dirichlet series \eqref{def_hat_g}. If the Dirichlet series $f$ has positive coefficients, then the Dirichlet convolution polynomials 
$\alpha_n(x)$ generated by $f$ also have positive coefficients (this follows from formula \eqref{alpha_n_formula2}, where $\hat \alpha_n(0)$ should be replaced by $c_n$), thus $\hat \alpha_n(x)>0$ for all $x>0$ and \eqref{def_hat_g} implies that $g$ is also a Dirichlet series with positive coefficients. The rest of the proof is based on Landau's Theorem, which  tells us that the abscissa of convergence of $g$ coincides with the largest point $s^* \in \r$ where $g(s)$ is not analytic.

\begin{figure}
\begin{subfigure}{.5\textwidth}
  \centering
  \includegraphics[width=\linewidth]{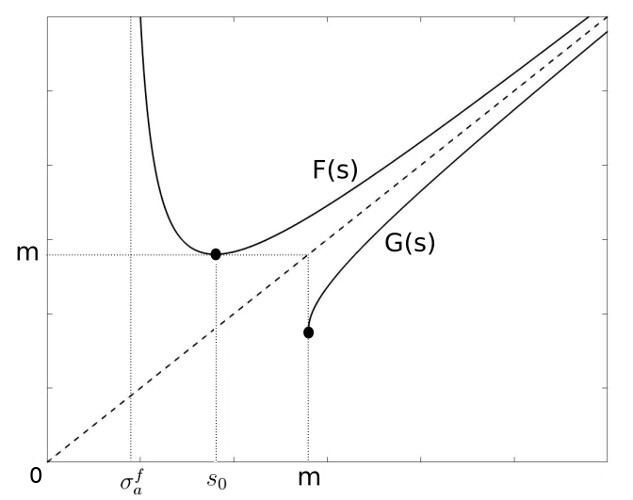}
  \caption{The case $f'(\sigma_a^f+)<-1/w$.}
  \label{Figure1_a}
\end{subfigure}
\begin{subfigure}{.5\textwidth}
  \centering
  \includegraphics[width=\linewidth]{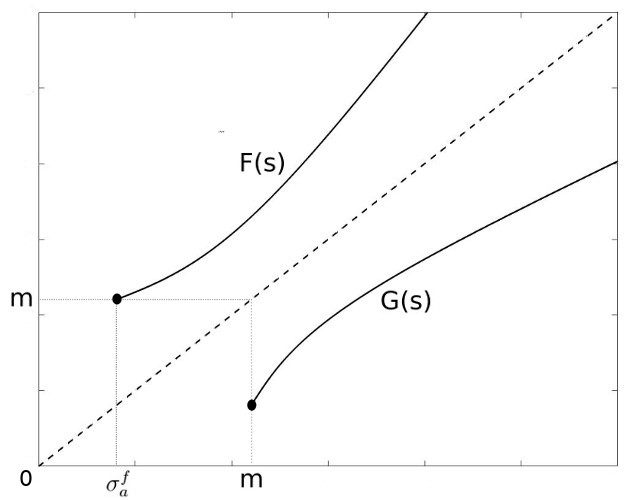}
  \caption{The case $f'(\sigma_a^f+)\ge -1/w$.}
  \label{Figure1_b}
\end{subfigure}
\caption{Illustration to the proof of Proposition \ref{prop_abscissa}.}
\label{fig2}
\end{figure}

We introduce a new function $G(s):=s-wg(s)$, so that the functional equation $f(s-wg(s))=g(s)$ becomes $f(G(s))=(s-G(s))/w$, which we rewrite in an equivalent form $G(s)+w f(G(s))=s$. Thus the function $G(s)$ is the inverse function of $F(s):=s+w f(s)$. Next we collect some properties of $F$:
\begin{itemize}
\item[(i)] $F$ is analytic in the half-plane ${\mathbb C}_{\sigma_a^f}$;
\item[(ii)] $F$ is real-valued and convex on $(\sigma_a^f,\infty)$;
\item[(iii)] $F(s)\to +\infty$ as $s\to +\infty$. 
\end{itemize}
Properties (i) and (iii) follow from the definition of $F$. Property (ii) follows from the fact that 
$$
F''(s)=w f''(s)=\sum\limits_{n\ge 2} \ln(n)^2 \frac{c_n}{n^s} >0, \;\;\; s> \sigma_a^f. 
$$
From the above properties of $F$ we conclude that there exists a unique $s_0 \in (\sigma_a^f, \infty)$ where $F$ achieves its absolute minimum 
$$
F(s_0)=m:=\min\limits_{s \ge \sigma_a^f} F(s). 
$$
Since $F$ is convex and it achieves its minimum at $s_0$, we know that $F$ is strictly increasing on $(s_0,\infty)$ and $F$ maps 
$(s_0,\infty)$ to $(m,\infty)$.   Thus there exists an inverse function $G$ that is defined on $(m,\infty)$, see Figure \ref{fig2}. 
We also know that $F$ is analytic in the neighborhood of any point $s_1>s_0$ and $F'(s_1)>0$, thus the inverse function $G$ is analytic in the neighborhood of $F(s_1)$. This shows that $G(s)$ (and thus $g(s)=(s-G(s))/w$) is analytic near every point $s>m$, thus we can apply Landau's Theorem and conclude that $\sigma_a^g \le m$. 

To prove that $\sigma_a^g=m$ it remains to show that the function $g(s)$ (and thus $G(s)$) is not analytic at $s=m$.  Let us consider frist the 
case when the minimum of $F$ is achieved in $(\sigma_a^f, \infty)$, equivalently, the case when $s_0>\sigma_a^f$ (see Figure \eqref{Figure1_a}). It is clear that $s_0>\sigma_a^f$ if and only if $F'(\sigma_a^f+)<0$, which is equivalent to $f'(\sigma_a^f+)<-1/w$. The function $F$ is analytic near $s_0$ but since $F'(s_0)=0$ and $F''(s_0)>0$ the inverse function $G(s)=F^{-1}(s)$ will have a branching singularity  
at $m=F(s_0)$, thus $\sigma_a^g=m$ in this case. 

Next, let us consider the remaining case when the minimum of $F$ is achieved at $s_0=\sigma_a^f$ (see Figure \ref{Figure1_b}). This case is possible if and only if $F'(\sigma_a^f+)\ge 0$, which is equivalent 
$f'(\sigma_a^f+)\ge -1/w$. Thus the Dirichlet series for $f'(s)$ (which has only nonpositive coefficients) converges absolutely at 
$s=\sigma_s^f$, and this implies that the Dirichlet series for $f(s)$ also converges absolutely at $s=\sigma_a^f$. If $f'(\sigma_a^f+)=-1/w$ (equivalently, if $F'(\sigma_a^f+)=0$) the inverse function $G$ would satisfy $G(m+)=+\infty$, thus it can't be analytic at $m$. If 
$f'(\sigma_a^f+)>-1/w$ (equivalently, if $F'(\sigma_a^f+)>0$), assuming that $G$ is analytic near $m$ we would have $G'(m)>0$, so that the inverse function $F(s)=G^{-1}(s)$ should be analytic in the neighbourhood of $s=\sigma_a^f$, which we know is not the case (again, due to Landau's Theorem, the function $f$ is not analytic at the abscissa of absolute convergence). Thus we arrive at a contradiction and we conclude that $\sigma_a^g=m$. 
\end{proof}

\paragraph{Acknowledgements}
The research was supported by the
Natural Sciences and Engineering Research Council of Canada.

%****************************************************************************************************************
%****************************************************************************************************************
%****************************************************************************************************************

%\bibliographystyle{abbrv}
%\bibliography{references}

%****************************************************************************************************************
%****************************************************************************************************************
%****************************************************************************************************************

\end{document}